%% file: main.tex
\documentclass{article}
\usepackage{graphicx} 
\usepackage{amsmath}
\usepackage{amsthm}
\usepackage{amsfonts}
\usepackage{amssymb}
\usepackage{commath}
\usepackage{xcolor}
\usepackage{authblk}

\input{symbols.tex}

\title{Counting Algebraic Integers of Bounded Height in Cyclotomic Fields}

\author[1]{Phillip Harris}
\affil[1]{University of Bonn}

\author[2]{John Yin}
\affil[2]{The Ohio State University}

\begin{document}

\maketitle

\begin{abstract}
   In this note, we fix a height bound $B$, and give a bound on the number of algebraic integers and units of absolute height at most $B$ in cyclotomic extensions $\bbQ[\zeta_q]$, where $q$ is a prime power. The bound is asymptotic in $q$.
\end{abstract}

\section{Introduction}

Let $K$ be a number field and let $L$ be a finite extension. We define the exponential Weil height relative to $K$ of an element of $[\alpha_0: \dots :\alpha_n] \in \bbP^n_K(L)$ by $$H([\alpha_0:\dots:\alpha_n]) = \left(\prod_{\nu \in \text{ places of }L} \max(|\alpha_0|_{\nu}, \dots, |\alpha_n|_{\nu})\right)^{1/[L:K]},$$ where $L$ can be taken to be any number field containing the $\alpha_i$. We also define the logarithmic Weil height relative to $K$ to be 
\[
h([\alpha_0:\dots:\alpha_n]) =
    \log(H([\alpha_0:\dots:\alpha_n])).
\]

Much of the literature on counting points of bounded height derives its motivation from the Batyrev-Manin conjecture, which fixes a base field $K$, and predicts the number of $K$ points of bounded height $B$, as $B$ grows. In this note, we are curious about fixing the height bound, and letting the field vary instead.

Previous work in this aspect includes \cite{DUBICKAS_2018}. Here, Dubickas showed the following. Let $N(d, B)$ be the number of $\alpha \in \bar \bbQ$ of degree at most $d$, with log Weil height at most $B$. Then, for each fixed $B > 0$, $\log N(d, B) \sim B d^2$ as $d \to \infty$. 

We choose to study the prime power cyclotomic fields $\bQ(
\zeta_q), q = p^r$ as $r \to \infty$. Our main theorem concerns units in the ring of integers. Define
\[
U(n,B) = \# \{\alpha \in \bbZ[\zeta_n]^{\times}: h(\alpha) < B\},
\]
and
\[
U_{\cyc}(n,B) = \# \{\alpha \in \bbZ[\zeta_n]^{\times}: h(\alpha) < B, \alpha \text{ is a cyclotomic unit}\}.
\]
Then, we will show
\begin{thm} \label{thm:main}
    Let $q=p^r$ be an odd prime power. For all $\epsilon > 0$ and $B>0$, the cyclotomic units in $\bQ(\zeta_q)$ satisfy
    \[
    U_{\cyc}(q,B) \ll q^3 \exp \left( C_\epsilon B^{1/3} q^{5/6+\epsilon} \right).
    \]
    Moreover, the full unit group of $\bQ(\zeta_q)$ satisfies
    \[
    U(q,B) \ll q^3 \exp \left( C_\epsilon (h_q^+ B)^{1/3} q^{5/6+\epsilon} \right),
    \]
    where $h_q^+$ is the class number of $\bQ(\zeta_q)^+$.
\end{thm}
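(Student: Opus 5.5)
The plan is to reduce to counting integer exponent vectors. First I prove the bound for cyclotomic units. Then I deduce the full unit group bound from it using the index $[E:C]=h_q^+$.

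\textbf{Setup.} Put $K=\bQ(\zeta_q)$ and $n=\varphi(q)/2$, and let $G=(\bbZ/q)^\times/\{\pm1\}$. Up to roots of unity, which cost a factor $\le 2q$, every cyclotomic unit can be written as $\alpha=\prod_{a\in G,\,a\neq 1}\xi_a^{e_a}$ with $e\in\bbZ^{n-1}$. Here $\xi_a=\zeta^{(1-a)/2}(1-\zeta^a)/(1-\zeta)$, so $\xi_a$ is real up to a root of unity. Let $\ell(\alpha)=(\log|\sigma_b\alpha|)_{b\in G}$ be the log vector of $\alpha$. For a unit the coordinates of $\ell(\alpha)$ sum to $0$, so
\[
h(\alpha)=\frac{1}{2n}\|\ell(\alpha)\|_1 .
\]
Hence $h(\alpha)<B$ gives $\|\ell(\alpha)\|_1<2Bn$. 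Each coordinate is also bounded: $|\log|\sigma_b\alpha||\le [K:\bQ]\,h(\alpha)\le 2Bn$. Combining the two bounds gives $\|\ell(\alpha)\|_2^2\le \|\ell\|_1\|\ell\|_\infty$. I would try to refine this by splitting the coordinates at a threshold $M$: at most $2Bn/M$ coordinates can exceed $M$ in absolute value. The threshold $M$ is a parameter to optimise at the end.

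\textbf{Diagonalising.} The map $e\mapsto \ell(\alpha)$ is a convolution on $G$ with kernel $b\mapsto \log|1-\zeta^b|$, corrected by the $1-\zeta$ normalisation. Fourier analysis on $G$ therefore diagonalises it. For an even character $\chi\ne 1$ the eigenvalue is
\[
\lambda_\chi=\sum_b \chi(b)\log|1-\zeta^b|,
\]
which up to a Gauss sum equals a constant times $\sqrt{f_\chi}\,L(1,\bar\chi)$ on the primitive part of $\chi$. For $q=p^r$ the conductors $f_\chi$ are powers of $p$, and most characters have conductor $\asymp q$. Using lower bounds $|L(1,\chi)|\gg_\epsilon f_\chi^{-\epsilon}$, Siegel-type bounds being ineffective but sufficient here, Plancherel gives
\[
\|\hat e\|_2^2\ll q^{-1+\epsilon}\|\ell(\alpha)\|_2^2
\]
on the high-conductor part. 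The characters of small conductor $p^j$ number only $O(p^j)$, and I would treat them separately using the explicit value $\sqrt{p^j}$. Pulling back, $e$ then lies in an explicit ellipsoid. Most of its axes have length about $T$ with $T^2\approx q^{-1+\epsilon}\cdot(\text{bound on }\|\ell\|_2^2)$.

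\textbf{Counting.} Integer vectors in $\bbZ^{n}$ of $\ell_2$-norm at most $T$ are supported on at most $T^2$ coordinates, each of size at most $T$. Their number is therefore at most $\binom{n}{T^2}(2T+1)^{T^2}=\exp(O(T^2\log q))$ when $T^2\le n$, and the volume bound takes over otherwise. Combining this count with the truncation parameter $M$ and optimising $M$ is where I expect the exponent $B^{1/3}q^{5/6+\epsilon}$ to come from. The polynomial prefactor $q^3$ absorbs the torsion, the small-conductor characters and the bookkeeping.

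\textbf{Main obstacle.} The crux is exactly this balance: turning the $\ell_1$ height bound into an $\ell_2$ bound on $e$ that is strong enough. The crude inequality $\|\ell\|_2^2\le 4B^2n^2$ alone gives $T^2\approx B^2q^{1+\epsilon}$, which is useless. So the $M$-splitting has to be done carefully. The large coordinates should be handled by a separate combinatorial count of which coordinates they are and their rough sizes. The small coordinates should go through Plancherel and the $L(1,\chi)$ lower bound.

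\textbf{Full unit group.} Since $[E:C]=h_q^+$ (Sinnott), $\alpha^{h_q^+}$ is a cyclotomic unit, and $h(\alpha^{h_q^+})=h_q^+h(\alpha)<h_q^+B$. The map $\alpha\mapsto \alpha^{h_q^+}$ is injective on the units modulo roots of unity, because the unit group modulo torsion is free. So $U(q,B)\le 2q\,U_{\cyc}(q,h_q^+B)$, and the factor $2q$ is absorbed into $q^3$. This gives the second bound of Theorem~\ref{thm:main}.
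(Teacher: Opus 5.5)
Your reduction (height as half the $\ell^1$ norm of the log vector, diagonalisation by even characters, Siegel's bound) matches the paper, but the counting step has a genuine gap. The step that should produce $\exp(C_\epsilon B^{1/3}q^{5/6+\epsilon})$ is never carried out; you only say that optimising $M$ is ``where I expect the exponent to come from''. Worse, the mechanism you propose (Plancherel to bound $\|e\|_2$, then a sparsity count of integer points in a Euclidean ball) fails in the low-conductor directions. The issue there is not the size $\sqrt{p^j}$ of the eigenvalues. Those directions contain genuine units of bounded height whose exponent vectors are dense.

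For example, take $p\ge 5$ and $b\not\equiv\pm1\pmod p$. The unit $\xi_b^{(p)}=\zeta_p^{(1-b)/2}(1-\zeta_p^b)/(1-\zeta_p)$ of $\bQ(\zeta_p)^+$ has height bounded by an absolute constant, by the same Riemann-sum argument as in the paper's lower bound. Since $1-\zeta_p^b=\prod_{a\equiv b\ (p)}(1-\zeta_q^a)$, its exponent vector in the basis $\{\xi_a\}$ of $\bQ(\zeta_q)^+$ has $p^{r-1}$ entries $+1$ and $p^{r-1}-1$ entries $-1$. So $\|e\|_2^2\approx 2q/p$. For fixed $p$, any Euclidean ball containing all exponent vectors of height $<B$ therefore has $T^2\gg q$, and your count $\binom{n}{T^2}(2T+1)^{T^2}$ is then $\exp(\gg q)$.

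To salvage this you would have to fibre over the projection of $e$ onto the (rational) low-conductor subspace and count that projection as a separate lattice problem of dimension $\asymp p^j$. Even then, $T^2$ and the number $\rho/M$ of large coordinates are both linear in $B$, and it is not clear how your scheme reaches the $B^{1/3}$ dependence. A minor point: $U(q,B)\le 2q\,U_{\cyc}(q,h_q^+B)$ yields $q^4$, because $U_{\cyc}$ already contains the torsion factor. The fix is to apply the $h_q^+$-power map $E_q^+\to C_q^+$ to real units and multiply by the roots of unity once, as the paper does.

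The missing idea is never to pass from $\ell^1$ to $\ell^2$. The set $\{c:\|Ac\|_1<\rho\}$ is the polytope $\cP=A^{-1}\cB_\rho$, the convex hull of only $2D$ points $\pm\rho A^{-1}e_a$. The character computation is used only to bound these columns: $\|A^{-1}e_a\|_2^2\le\frac1N\sum_{\chi\neq1}|\chi(a)-1|^2/|\lambda_\chi|^2\ll q^{-1+\epsilon}$. Here $|\lambda_\chi|^{-2}$ is averaged over all characters, so the few small-conductor characters cost nothing. In your $\ell^2\to\ell^2$ setting, by contrast, those directions are contracted only by about $p^{-j/2}$. Thus every vertex of $\cP$ lies in a ball of radius $R\approx BDq^{-1/2+\epsilon}$.

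Davenport's lemma then reduces the count to $\sum_I\Vol(\pi_I\cP)$. Each $d$-dimensional projection is the convex hull of $2D$ points in a ball of radius $R$, so by Carath\'eodory it is covered by $\binom{2D}{d+1}$ simplices of volume $\ll R^d/(d-1)!$. This factorial saving over the volume of a $d$-ball, which is only about $(C/\sqrt d)^dR^d$, puts the optimum at $d^*\approx(D^2R)^{1/3}\approx B^{1/3}q^{5/6+\epsilon}$. Any Euclidean-ball count throws that saving away.
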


While we do not have much control over $h^+_q$, numerical evidence suggests it is small. In particular, using Cohen-Lenstra heuristics, Buhler, Pomerance, and Robertson conjecture that $h_{p^r}^+=h_{p^{r+1}}^+$ for all but finitely many pairs $(p,r)$ \cite{BPR2004}. Thus we conjecturally have a subexponential bound for fixed $p$ as $r \to \infty$.

We also give an exponential upper bound for algebraic integers.

\begin{thm} \label{thm:integers}
Let $q=p^r$ be an odd prime power, let $D=\phi(q)$, and set
\[
I(q,B)=\#\{ \alpha \in \bZ[\zeta_q] : h(\alpha) \leq B \}.
\]
Then
\[
\log I(q,B) \ll D(B+1).
\]
\end{thm}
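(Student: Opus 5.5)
Since $\alpha$ is an algebraic integer, it has no nonarchimedean contribution to the height. The plan is to translate the height bound into a bound on the archimedean conjugates and then count lattice points. With the height taken relative to $\bQ$ (if it is relative to $\bQ(\zeta_q)$ the argument is the same up to constants), for $\alpha\in\bZ[\zeta_q]$ we have
\[
h(\alpha)=\frac{1}{D}\sum_{\sigma}\log^+|\sigma(\alpha)|,
\]
where $\sigma$ runs over the $D$ complex embeddings. So $h(\alpha)\le B$ says exactly that $\sum_\sigma \log^+|\sigma\alpha|\le DB$. Put $t_\sigma=\log^+|\sigma\alpha|$. Each such $\alpha$ lies in a box
\[
\{x\in\bC^{D}: |x_\sigma|\le e^{t_\sigma}\}
\]
in the Minkowski embedding, for some vector $(t_\sigma)$ of nonnegative reals with sum at most $DB$.

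Next I discretize the $t$'s. Replace each $t_\sigma$ by $\lceil t_\sigma\rceil=:k_\sigma\in\bZ_{\ge0}$. Then $\sum k_\sigma\le D(B+1)$, and the number of such integer vectors is at most $\binom{D(B+1)+D}{D}\le 2^{D(B+2)}$, which is $\exp(O(D(B+1)))$. So it suffices to show that each box
\[
R_k=\{x: |x_\sigma|\le e^{k_\sigma}\}
\]
contains at most $\exp(O(D(B+1)))$ lattice points of $\bZ[\zeta_q]$.

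For this I use a packing argument, which I expect to be the main step. Two distinct lattice points $\alpha\neq\beta$ in $R_k$ have difference $\gamma=\alpha-\beta\neq0$ with $|N(\gamma)|\ge1$. That norm bound alone does not separate points coordinatewise, so I would instead use the standard bound
\[
\#(\Lambda\cap R)\le \frac{\mathrm{vol}(R+F)}{\mathrm{covol}(\Lambda)},
\]
where $F$ is a fundamental domain. To make this work I first enlarge $R_k$ to the box with radii $e^{k_\sigma}+c$. Then I use the fact that the lattice $\bZ[\zeta_q]$ has a basis $1,\zeta,\dots,\zeta^{D-1}$ whose embeddings all have absolute value $1$. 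This gives a fundamental parallelepiped $F$ with every coordinate bounded by $D$, but that bound is too weak.

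The better route is to count coefficient vectors directly. Write $\alpha=\sum a_i\zeta^i$. For a prime power, inverting the embedding matrix gives
\[
|a_i|\le \frac{1}{D}\sum_\sigma |\sigma\alpha|
\]
up to a constant factor, because the trace form has a nice inverse: $a_i$ is essentially $\frac1q\mathrm{Tr}(\alpha\zeta^{-i}\cdot\text{(different correction)})$. Hence $|a_i|\le C\,\frac{1}{D}\sum_\sigma e^{k_\sigma}$. This is too crude when one $k_\sigma$ is large, so instead I bound the volume. The volume of $R_k$ is $\pi^{D/2}e^{\sum k_\sigma}\le \exp(O(D(B+1)))$. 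Enlarging each radius by $1$ multiplies the volume by at most $2^{D}$. The covolume of $\bZ[\zeta_q]$ is $|d_K|^{1/2}\ge1$, up to a factor $2^{-D/2}$.

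The remaining issue is to show that the translates $\alpha+F'$ are disjoint and lie in the enlarged box for some small symmetric $F'$ of volume at least $e^{-O(D)}$. Here I would take $F'$ to be a small ball; the disjointness is the crux, and I would get it from minkowski's successive minima. The minimum of $\|x\|_\infty$ on $\bZ[\zeta_q]\setminus 0$ is at least $1$ by AM–GM on the norm, so balls of $\ell^\infty$-radius $1/2$ around lattice points are disjoint. Such a ball has volume $(\pi/4)^{D/2}$. Comparing volumes then gives at most
\[
\frac{\mathrm{vol}(R_k^{+1/2})}{(\pi/4)^{D/2}}\le \exp(O(D(B+1)))
\]
points per box. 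Multiplying by the number of boxes gives $\log I(q,B)\ll D(B+1)$.
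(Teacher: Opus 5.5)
Your argument is correct, and once the detours are stripped away it is a genuinely different and simpler route than the paper's.

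Both proofs share the first step: cover the height region by $\exp(O(D(B+1)))$ product boxes indexed by integer vectors $k$ with $\sum k\le D(B+1)$. The paper uses dyadic squares $Q(k)$; you use products of disks of radius $e^{k_\sigma}$.

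The proofs differ in how they count lattice points in a single box.
\begin{itemize}
\item The paper applies Davenport's lemma. This forces it to bound the volumes of all coordinate projections of $M^{-1}Q$, which it does via the zonotope formula, Cauchy--Schwarz, Jacobi's complementary minors, Cauchy--Binet and Hadamard. It then needs the cyclotomic discriminant formula to get $D^{D/2}/\abs{\det M}\le C^D$. That step uses specifically that the power basis consists of roots of unity and that $\abs{\disc K}$ is roughly $q^D$.
\item Your packing argument needs none of this. For $\gamma\neq 0$ in $\bZ[\zeta_q]$, $\prod_\sigma\abs{\sigma\gamma}=\abs{N(\gamma)}\ge 1$ forces $\max_\sigma\abs{\sigma\gamma}\ge 1$. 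Hence the polydisks of radius $1/2$ about distinct lattice points are disjoint. Each box therefore contains at most $\prod_{i=1}^{D/2}(2e^{k_i}+1)^2\le 3^D e^{D(B+1)}$ points.
\end{itemize}

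Your route gives the explicit bound $I(q,B)\le 2^{D(B+2)}\,3^D e^{D(B+1)}$. It never touches the covolume or the discriminant. It also works verbatim for $\mathcal{O}_K$ of any number field of degree $D$, using intervals at real places.

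What Davenport's lemma buys in principle is a main term (volume over covolume) plus an error term, i.e.\ the kind of statement needed for asymptotics. This theorem only extracts an upper bound, so nothing is lost with your approach.

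When you write it up, delete the abandoned detours: the fundamental parallelepiped, the coefficient bounds, and the covolume remark. Also state plainly that disjointness comes from the integrality of the norm alone, not from Minkowski's successive minima or AM--GM.
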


Finally, we observe that both the above quantities have a polynomial lower bound. We also note that $\bF_q(t)$ is a natural function field analogue for $\bQ(\zeta_q)$, and in this case, the height is just the degree in $t$, giving $\# \{\alpha \in \bbF_q[t]: h(\alpha) \leq B\} = q^{B+1}$. We might speculate that the true quantity of $\bZ[\zeta_q]$-points is roughly polynomial in $q$.

\subsection{Acknowledgments}
We thank Sun-Woo Park for many helpful discussions. PH is supported by Deutsche Forschungsgemeinschaft (DFG) Grant No. SFB-TRR 358/1 2023 — 491392403. JY is supported by National Science Foundation RTG grant DMS-2231565. AI was used for minor writing and proofreading help. 

\section{Proof}

The main geometric input is the following lattice point counting lemma of Davenport \cite{Davenport1951}. Let $\cP \subset \bR^D$ be a compact body. For each $0 \leq j \leq D$, set
\[
V_j(\cP) = \sum_{\substack{I \subset [D] \\ \abs{I} = j}} \Vol_j (\pi_I(\cP)),
\]
where $\pi_I$ is the projection to $\bR^I$ and we normalize $V_0(\cP)=1$. If every line parallel to a coordinate axis meets $\cP$ in at most $h$ intervals, and the same is true for every coordinate projection $\pi_I(\cP)$, then
\[
\left| \#(\bZ^D \cap \cP) - \operatorname{Vol}(\cP) \right|
\leq
\sum_{j=0}^{D-1} h^{D-j} V_j(\cP).
\]
We also record the standard volume formula for zonotopes (the projection of a parallepiped, equivalently, a Minkowski sum of line segments).  Let $M$ be a $d \times n$ matrix with columns $v_1,\dots,v_n \in \bR^d$, defining a zonotope
\[
Z=\left\{ \sum_{i=1}^n t_i v_i : 0 \leq t_i \leq 1 \right\}.
\]
Write $[M]_{I,J}$ for the determinant of the submatrix of $M$ with rows $I$ and columns $J$. Then
\[
\Vol_d(Z)
=
\sum_{\substack{I \subset [n] \\ \abs{I}=d}}
\abs{[M]_{[d],I}}.
\]
We will also use Jacobi's complementary minor identity: if $A$ is an invertible $D \times D$ matrix and $I,J \subset [D]$ satisfy $\abs{I}=\abs{J}$, then writing $I^c=[D] \setminus I$ and $J^c=[D] \setminus J$ one has
\[
[A]_{I,J}
=
(-1)^{\sum_{i \in I} i + \sum_{j \in J} j}
\det(A)[A^{-1}]_{J^c,I^c}.
\]

\subsection{Units}
Let $q=p^r$ with $p$ an odd prime. Let $E_q \subset \bZ[\zeta_q]^\times$ be the group of units. Let $V_q \subset \bQ(\zeta_q)^\times$ be the multiplicative subgroup generated by $\zeta_q$ and $1-\zeta_q^a$ for all $a \neq 0 \pmod q$. Then $C_q = E_q \cap V_q$ is called the group of \textit{cyclotomic units} of $\bQ(\zeta_q)$.
Let $\bQ(\zeta_q)^+ = \bQ(\zeta_q+\zeta_q^{-1})$ be the real subfield and $C^+_q \subset E^+_q$ the corresponding real subgroups. The cyclotomic units have finite index in the full unit group; unlike the full unit group, they enjoy a simple explicit basis \cite[Theorem 8.2]{Washington1997}:
\begin{prop}
For $1<a<q/2$, $(a,q)=1$, define
\[
\xi_a=\zeta_q^{(1-a)/2} \frac{1-\zeta_q^a}{1-\zeta_q}.
\]
The factor $\zeta_q^{(1-a)/2}$ is chosen so that $\xi_a \in \bR$. The $\xi_a$ and $-1$ form a basis for $C^+_q$ and we have
\[
\left[E_q^+:C_q^+ \right]=h_q^+
\]
where $h_q^+$ is the class number of $\bQ(\zeta_q)^+$.
\end{prop}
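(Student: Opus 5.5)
This is Washington's Theorem 8.2, which the paper cites; the plan below reconstructs its proof in that spirit.

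\emph{Step 1: the $\xi_a$ are real units.} Since $(a,q)=1$, there is $b$ with $ab\equiv 1 \pmod q$. Then $1-\zeta_q = 1-(\zeta_q^a)^b$ is divisible by $1-\zeta_q^a$ in $\bZ[\zeta_q]$. Conversely, $1-\zeta_q^a$ is divisible by $1-\zeta_q$. Hence $(1-\zeta_q^a)/(1-\zeta_q)$ is a unit. For reality, write
\[
\xi_a = \frac{\zeta_q^{a/2}-\zeta_q^{-a/2}}{\zeta_q^{1/2}-\zeta_q^{-1/2}} = \frac{\sin(\pi a/q)}{\sin(\pi/q)},
\]
where $\zeta_q^{1/2}$ is interpreted as $\zeta_q^{(q+1)/2}$, using that $q$ is odd. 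This shows $\xi_a\in C_q^+$.

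\emph{Step 2: $\pm\xi_a$ generate $C_q^+$.} Since $q=p^r$ is a prime power, every $1-\zeta_q^c$ with $c\not\equiv 0 \pmod q$ is either a unit times a $\pi$-power (if $(c,q)=1$), or of the form $1-\zeta_{p^s}^{c'}$ for some $s<r$. Those with $s<r$ factor as norms, via $\prod_{j}(1-\zeta_q^{c'+jp^{s}}) = 1-\zeta_{p^s}^{c'}$. This reduces $V_q$ to the group generated by $\zeta_q$, $1-\zeta_q$, and the $\xi_a$. A product $\zeta_q^k(1-\zeta_q)^m\prod_a\xi_a^{n_a}$ is a unit iff $m=0$, by valuation at the prime above $p$. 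Using $\xi_{q-a}=\pm\xi_a$ and $\zeta_q^{(1-a)/2}$, it is real iff the $\zeta_q$-power is $\pm1$. So $C_q^+=\langle -1,\xi_a\rangle$.

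\emph{Step 3: independence and the index.} There are $\phi(q)/2-1$ values of $a$, which equals the unit rank of $\bQ(\zeta_q)^+$. The key step is to compute the regulator of $\{\xi_a\}$ via the Dedekind determinant. Taking the matrix $\log|\sigma_b\xi_a|$ and factoring by group determinants over even Dirichlet characters $\chi\ne 1$ gives
\[
R(\{\xi_a\}) = \pm\prod_{\chi\ne1\text{ even}} \tfrac12\sum_{b} \chi(b)\log|1-\zeta_q^b|.
\]
Each factor is, up to a Gauss sum, $L(1,\chi)$ (through $\tau(\bar\chi)L(1,\chi)$ for primitive $\chi$). Since $q$ is a prime power, imprimitive characters reduce to their conductor with no Euler factor loss. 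The product is nonzero, which gives independence. The analytic class number formula for $\bQ(\zeta_q)^+$ then shows $R(C_q^+)/R(E_q^+)=h_q^+$, and hence $[E_q^+:C_q^+]=h_q^+$.

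The main obstacle is Step 3. It requires matching the Gauss sum and conductor factors exactly against the class number formula $\prod_{\chi\neq1} L(1,\chi)$. The prime-power hypothesis is essential here, since otherwise the Euler factors $(1-\chi(\ell))$ would vanish or distort the index.
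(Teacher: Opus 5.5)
Your proposal is correct and is essentially Washington's proof of Theorem 8.2 (together with the generation lemma preceding it), which the paper cites in place of a proof. The distribution relation and the valuation at $(1-\zeta_q)$ give $C_q^+=\langle -1,\xi_a\rangle$. The Dedekind determinant, which is the same even-character diagonalization the paper later carries out for $Av_\chi=\lambda_\chi v_{\overline{\chi}}$, combined with the class number formula for $\bQ(\zeta_q)^+$, then gives independence and $[E_q^+:C_q^+]=h_q^+$.

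One harmless slip: with $\zeta_q^{1/2}=\zeta_q^{(q+1)/2}$ you actually get $\xi_a=(-1)^{a+1}\sin(\pi a/q)/\sin(\pi/q)$. For example, $\xi_2=\zeta_5^2+\zeta_5^3<0$ when $q=5$. This is still real, so nothing downstream changes.
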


For any unit $u \in E_q^+$, we have $u^{h_q^+} \in C_q^+$ and $h(u^{h_q^+}) = h_q^+ h(u)$. Thus, we may count real units of height at most $B$ by counting real cyclotomic units of height at most $h_q^+ B$. Also, the units of $\bQ(\zeta_q)$ are those of $\bQ(\zeta_q)^+$ times a root of unity, so passing back to $\bQ(\zeta_q)$ we pick up a factor of $q$. Thus, to prove Theorem \ref{thm:main}, it suffices to prove the following:

\begin{prop}

    For all $\epsilon > 0$, we have $$U^+_{\cyc}(q,B) \ll q^2 \exp(C_{\epsilon} B^{1/3} q^{5/6+\epsilon}).$$
\end{prop}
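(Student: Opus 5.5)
The plan is to translate the height condition into a lattice-point count in a polytope, and then apply Davenport's lemma, where every term is controlled by the spectral structure of the regulator matrix. Set $n=\phi(q)/2$ and $D=n-1$, and index the basis $\xi_a$, $1<a<q/2$, $(a,q)=1$, by $[D]$. Index the real embeddings $\sigma_b$ by $b \in (\bZ/q)^\times/\pm 1$. For a real cyclotomic unit $u=\pm\prod_a \xi_a^{m_a}$, the vector $\ell(u)=(\log\abs{\sigma_b u})_b$ has coordinate sum zero. Since all places of $\bQ(\zeta_q)^+$ are real, we have $h(u)=\frac{1}{2n}\sum_b \abs{\log\abs{\sigma_b u}}$. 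So $h(u)<B$ says that $\ell(u)$ lies in the $\ell^1$-ball of radius $R=2nB \ll qB$ inside the trace-zero hyperplane. Dropping one coordinate $b_0$, we get a $D\times D$ matrix $M=(\log\abs{\sigma_b\xi_a})$ and the condition $m \in \cP := M^{-1}(\cQ)$. Here $\cQ$ is the projection of the $\ell^1$-ball, which is contained in the cross-polytope of radius $R$ in $\bR^D$. So $U^+_{\cyc}(q,B)\le 2\,\#(\bZ^D\cap\cP)$. Since $\cP$ is convex, Davenport's lemma applies with $h=1$ and gives $\#(\bZ^D\cap\cP)\le \sum_{j=0}^{D} V_j(\cP)$.

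The key input is a bound on the $j\times j$ minors of $M^{-1}$. Up to the removal of the single coordinate $b_0$ (a rank-one correction to handle with care), $M$ is a group matrix on $G=(\bZ/q)^\times/\pm1$ with entries $f(ab)$, where $f(c)=\log\abs{1-\zeta_q^{c}}-\log\abs{1-\zeta_q}$. It is therefore diagonalized by the even characters $\chi\ne 1$. Its eigenvalues are $\lambda_\chi=\sum_c \chi(c)\log\abs{1-\zeta_q^c}$, and by the classical formula these equal $\tau(\chi)L(1,\bar\chi)$ up to a bounded factor, for primitive $\chi$. For imprimitive $\chi$ of conductor $p^s$ one gets the analogous expression, which also picks up Euler-factor and index contributions. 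Thus $\abs{\lambda_\chi}\gg \sqrt{\mathrm{cond}(\chi)}\,\abs{L(1,\chi)}$. Using the (ineffective for the constant, which is harmless here) Siegel-type bound $\abs{L(1,\chi)}\gg_\epsilon q^{-\epsilon}$, together with the fact that most characters are primitive, I would aim to show $\norm{M^{-1}}_{\mathrm{op}}\ll_\epsilon q^{-1/2+\epsilon}$. Then every $j\times j$ minor of $M^{-1}$ is at most $\norm{M^{-1}}^j_{\mathrm{op}}\le (C_\epsilon q^{-1/2+\epsilon})^j$. Jacobi's identity gives an equivalent route via complementary minors of $M$ over $\det M=\mathrm{Reg}(C_q^+)$, if that turns out to be cleaner. \textbf{This step is the main obstacle.} The difficulty is controlling the imprimitive characters, whose conductor is only $p^s$ with $s<r$; their eigenvalues are smaller. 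One must check that their number is small enough, at most $\phi(p^{r-1})/2$ of them, that the loss is absorbed, or else bound the minors by Cauchy--Binet through the eigen-decomposition instead of through the operator norm.

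With this bound, estimate $V_j(\cP)$. For $\abs{I}=j$, the projection $\pi_I(\cP)$ is the convex hull of the $2D$ points $\pm R\,\pi_I(M^{-1}e_k)$. Triangulating from the origin, its volume is at most $\binom{2D}{j}\frac{1}{j!}R^j\max_{J}\abs{[M^{-1}]_{I,J}}$. Summing over the $\binom{D}{j}$ choices of $I$ gives
\[
V_j(\cP)\le \binom{D}{j}\binom{2D}{j}\frac{(C_\epsilon R\, q^{-1/2+\epsilon})^j}{j!}\le \left(\frac{C'_\epsilon D^2 R\, q^{-1/2+\epsilon}}{j^3}\right)^j\le\left(\frac{C''_\epsilon B q^{5/2+\epsilon}}{j^3}\right)^j .
\]
The $j=D$ term, which is the volume term, is covered by the same estimate. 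The function $j\mapsto (X/j^3)^j$ is maximized near $j\asymp X^{1/3}$, where it is $\exp(O(X^{1/3}))$. With $X=C''_\epsilon Bq^{5/2+\epsilon}$ this is $\exp(C_\epsilon B^{1/3}q^{5/6+\epsilon})$, after renaming $\epsilon$. Summing over the at most $D+1\le q$ values of $j$, and including the sign $\pm$ and the polynomial constants, gives at most a factor $q^2$. This yields $U^+_{\cyc}(q,B)\ll q^2\exp(C_\epsilon B^{1/3}q^{5/6+\epsilon})$.
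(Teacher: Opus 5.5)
Your overall architecture matches the paper's: lattice points in $M^{-1}$ applied to a cross-polytope, Davenport's lemma with $h=1$, triangulated projections, and optimizing $(X/j^3)^j$. The gap is exactly the step you flagged, because the estimate you aim for is false. The bound $\|M^{-1}\|_{\mathrm{op}}\ll_\epsilon q^{-1/2+\epsilon}$ fails as soon as there are nontrivial even characters with conductor $f=p^s$ much smaller than $q$. Indeed, $M^{-1}v_{\bar\chi}=\lambda_\chi^{-1}v_\chi$ and $\|v_\chi\|_2=\|v_{\bar\chi}\|_2$, so $\|M^{-1}\|_{\mathrm{op}}\ge|\lambda_\chi|^{-1}\gg(\sqrt{f}\log f)^{-1}$. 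For fixed $p$ and $r\to\infty$ this is $\gg_p 1$. Counting how few such characters there are cannot rescue an operator-norm bound, since an operator norm is a supremum and a single bad eigenvector controls it regardless of multiplicity. So bounding $j\times j$ minors by $\|M^{-1}\|_{\mathrm{op}}^j$ gives nothing useful.

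The missing idea is to control an averaged quantity: the Euclidean norm of each column $M^{-1}e_a$. Write $N=\phi(q)/2$ and expand $e_a=\frac1N\sum_{\chi\ne1}(\overline{\chi(a)}-1)v_\chi$. The relation $\langle v_\chi,v_\psi\rangle=N\delta_{\chi\psi}-1$ then gives $\|M^{-1}e_a\|_2^2\le\frac1N\sum_{\chi\ne1}|\chi(a)-1|^2|\lambda_\chi|^{-2}$. By Siegel's bound, each of the at most $p^s/2$ characters of conductor $p^s$ contributes $\ll_\epsilon q^{2\epsilon}p^{-s}$. So each conductor level contributes $\ll q^{2\epsilon}/N$, and summing over the $r\ll\log q$ levels yields $\|M^{-1}e_a\|_2\ll_\epsilon q^{-1/2+\epsilon}$. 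This is where your multiplicity count actually does its job. Hadamard's inequality then gives $|[M^{-1}]_{I,J}|\le\prod_{k\in J}\|M^{-1}e_k\|_2\le(C_\epsilon q^{-1/2+\epsilon})^j$. That is exactly the minor bound your volume computation uses, and from there your argument goes through. The paper phrases this step differently: all vertices $\pm\rho A^{-1}e_a$ lie in a ball of radius $R\ll_\epsilon Bq^{1/2+\epsilon}$, so each covering simplex has volume at most that of a regular simplex inscribed in that ball.

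Two smaller corrections. First, the entries of $M$ are $\log|1-\zeta_q^{ab}|-\log|1-\zeta_q^{b}|$, not a function of $ab$ alone. With these entries, $Mv_\chi=\lambda_\chi v_{\bar\chi}$ holds exactly after deleting the index $1$ on both sides, because the $a=1$ term vanishes; so no rank-one correction arises. Second, since $q$ is a prime power, the distribution relation gives $\lambda_\chi=-\frac12\tau(\chi_f)L(1,\overline{\chi_f})$ exactly, where $\chi_f$ is the primitive character inducing $\chi$. There are no Euler factors.
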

\begin{proof}

Let $u$ be a real cyclotomic unit and write $u = \prod_i \xi_i^{c_i}$. For $1 \leq j < q/2$ with $(j,q) = 1$, let $\sigma_j$ be the element of the Galois group of $\bQ(\zeta_q)^+$ sending $\zeta_q+\zeta_q^{-1}$ to $\zeta_q^j+\zeta_q^{-j}$. Set $D=[\bQ(\zeta_q)^+ : \bQ]-1 = \phi(q)/2-1$. We write the Weil height
\[
h(u) = \frac{1}{[\bQ(\zeta_q)^+ : \bQ]} \sum_{\nu} \max(0, \log \abs{u}_\nu).
\]
Since $u$ is a unit, the non-archimedean valuations are trivial. The archimedean valuations are the absolute values of the Galois conjugates $\sigma_j(u)$. Thus, the height simplifies to
\[
h(u) = \frac{1}{D+1} \sum_j \max(0, \log \abs{\sigma_j(u)}).
\]
Furthermore, using $h(u) = h(u^{-1})$, we have
\begin{align*}
h(u) = \frac{h(u) + h(u^{-1})}{2} = \frac{1}{2(D+1)} \sum_{j} \abs{\log \abs{\sigma_j(u)}}.
\end{align*}
Form a $D \times D$ matrix $A$ with entries
\[
A_{ij} = \log \abs{\sigma_j(\xi_i)}
\]
for $1 < i, j < q/2$ with $(i,q)=(j,q)=1$. Then
\[
\norm{A c}_{L^1} = \sum_{j \neq 1} \abs{\log \abs{\sigma_j(u)}},
\]
and since $\sum_j \log \abs{\sigma_j(u)} = 0$ we have $2(D+1)h(u) \leq 2\norm{A c}_{L^1}$, so 
\[
\frac{1}{2(D+1)}
\leq \frac{h(u)}{\norm{A c}_{L^1}} \leq \frac{1}{D+1}.
\]
Define
\begin{align*}
N(q,T) :=
\# \{ c \in \bZ^D : \norm{A c}_{L^1} < T \}.
\end{align*}
Then $U^+_{\cyc}(q,B)$ is between $N(q,B(D+1))$ and $N(q,2B(D+1))$.

Now we can explicitly compute the action of $A$. For a nontrivial even Dirichlet character $\chi$ modulo $q$, define a vector $(v_\chi)_i = \chi(i)$.
\begin{prop}
    Let $\chi$ be a nontrivial even Dirichlet character modulo $q$, with conductor $f$, induced by $\chi_f$. Then
    \[
    A v_\chi = \lambda_\chi v_{\overline{\chi}}
    \]
    where
    \[
    \lambda_\chi
    =
    -\frac{1}{2}\tau(\chi_f)L(1,\overline{\chi_f}).
    \]
    In particular, for every $\epsilon > 0$,
    \[
    \abs{\lambda_\chi}
    =
    \frac{1}{2}\sqrt{f}\,\abs{L(1,\overline{\chi_f})}
    \geq C_\epsilon \sqrt{p}\, q^{-\epsilon}.
    \]
\end{prop}
\begin{proof}
First we record the following identity:
    \begin{align*}
    \sum_{(a,q) = 1} \chi(a) \log\abs{1-\zeta_q^a}
    =
    -\tau(\chi_f)
    L(1,\overline{\chi_f}).
    \end{align*}
    To derive this, write $q=p^r$ and $f=p^s$. We have
    \[
    \prod_{\substack{1 \leq a \leq p^r \\ a \equiv b \, (p^s)}} (1-\zeta_{p^r}^a)=1-\zeta_{p^s}^b,
    \]
    so
    \begin{align*}
    \sum_{\substack{1 \leq a \leq p^r \\ (a,p)=1}}\chi(a)\log\abs{1-\zeta_{p^r}^a}
    &=
    \sum_{\substack{1 \leq b \leq p^s \\ (b,p)=1}}
    \chi_f(b)
    \sum_{\substack{1 \leq a \leq p^r \\ a \equiv b \, (p^s)}}
    \log\abs{1-\zeta_{p^r}^a}
    \\
    &=
    \sum_{\substack{1 \leq b \leq p^s \\ (b,p)=1}}
    \chi_f(b)\log\abs{1-\zeta_{p^s}^b}.
    \end{align*}
    By \cite[Theorem 4.9]{Washington1997},
    \begin{align*}
    L(1,\overline{\chi_f})
    &=
    -\frac{\tau(\overline{\chi_f})}{f}
    \sum_{(b,f)=1}\chi_f(b)\log\abs{1-\zeta_f^b},
    \end{align*}
    and since $\chi_f$ is even, $\tau(\chi_f)\tau(\overline{\chi_f}) = \chi(-1)f = f$ and the identity follows.
Now we compute $(A v_\chi)_j$. Using evenness of $\chi$ we have
\begin{align*}
(A v_\chi)_j
&=
\sum_{\substack{1 < i < q/2 \\ (i,q) = 1}}
\chi(i)
\log \abs{\frac{1-\zeta_q^{ij}}{1-\zeta_q^j}}
\\
&=
\frac{1}{2}
\sum_{\substack{1 \leq i \leq q \\ (i,q) = 1}}
\chi(i)
\left[
\log\abs{1-\zeta_q^{ij}} - \log\abs{1-\zeta_q^j}
\right].
\end{align*}
where we also extend the range of the summation from $1 < i < q$ to $1 \leq i \leq q$ since the summand vanishes for $i = 1, q-1$. Since $\chi$ is nontrivial, the second term vanishes. Changing variables $k=ij$ in the first term yields
\begin{align*}
(A v_\chi)_j
&=
\frac{1}{2}
\overline{\chi(j)}
    \sum_{(k,q)=1}
    \chi(k)\log\abs{1-\zeta_q^k}
\\
&=
-\frac{1}{2}
\overline{\chi(j)}
\tau(\chi_f)L(1,\overline{\chi_f}),
\end{align*}
Thus $A v_\chi = \lambda_\chi v_{\overline{\chi}}$. Taking absolute values and using $\abs{\tau(\chi_f)}=\sqrt{f}$ gives
\[
\abs{\lambda_\chi}
=
\frac{1}{2}\sqrt{f}\,\abs{L(1,\overline{\chi_f})}.
\]
Finally, by Siegel's lower bound for Dirichlet $L$-functions \cite[Ch.~21]{Davenport2000}, $\abs{L(1,\chi_f)} \ge C_\epsilon f^{-\epsilon}$, so
\[
\abs{\lambda_\chi} \ge C_\epsilon \sqrt{p}\, q^{-\epsilon}.
\]
\end{proof}

Set
\[
N = D+1 = \phi(q)/2,
\qquad
G = (\bZ/q\bZ)^\times/\{\pm 1\}.
\]
For $a \in G \setminus \{1\}$, let $e_a$ be the basis vector with a $1$ at coordinate $a$ and $0$ elsewhere. Then
\[
e_a
=
\frac{1}{N}
\sum_{\chi \neq 1}
\left(\overline{\chi(a)}-1\right)v_\chi.
\]
The $v_\chi$ are not quite orthogonal, but we have
\[
\langle v_\chi,v_\psi\rangle
=
\sum_{b \in G \setminus \{1\}}\chi(b)\overline{\psi(b)}
=
N\delta_{\chi,\psi}-1.
\]
Therefore, for any coefficients $c_\chi$,
\[
\left\|\sum_{\chi \neq 1} c_\chi v_\chi\right\|_{2}^2
=
N\sum_{\chi \neq 1}\abs{c_\chi}^2
-
\left|\sum_{\chi \neq 1}c_\chi\right|^2
\leq
N\sum_{\chi \neq 1}\abs{c_\chi}^2.
\]
Using $A v_\chi=\lambda_\chi v_{\overline{\chi}}$ and relabeling $\chi$ by $\overline{\chi}$, we get
\[
A^{-1}e_a
=
\frac{1}{N}
\sum_{\chi \neq 1}
\frac{\chi(a)-1}{\lambda_\chi}v_\chi.
\]
Hence
\[
\norm{A^{-1} e_a}_{2}^2
\leq
\frac{1}{N}
\sum_{\chi \neq 1}
\frac{\abs{\chi(a)-1}^2}{\abs{\lambda_\chi}^2}.
\]
If $\chi$ has conductor $f_\chi = p^s$, then
\[
\frac{1}{\abs{\lambda_\chi}^2}
\ll_\epsilon
q^{2\epsilon}\frac{1}{p^s},
\]
so after grouping characters by conductor we get
\[
\norm{A^{-1} e_a}_{L^2}^2
\ll_\epsilon
\frac{q^{2\epsilon}}{N}
\sum_{s=1}^r
\frac{1}{p^s}
\sum_{f_\chi = p^s}
\abs{\chi(a)-1}^2
\ll
\frac{q^{2\epsilon}}{N}
\sum_{s=1}^r \frac{N_s}{p^s},
\]
where $N_s$ is the number of nontrivial even characters with conductor $p^s$. We have
\[
N_1 = \frac{p-3}{2},
\qquad
N_s = \frac{\phi(p^s)-\phi(p^{s-1})}{2}
=
\frac{p^{s-2}(p-1)^2}{2}
\quad (s \geq 2).
\]
Therefore $N_s/p^s \ll 1$, and since $r \ll \log q$, after renaming $\epsilon$ we have
\[
\norm{A^{-1} e_a}_{L^2} \ll_\epsilon q^{-1/2+\epsilon}.
\]
Let $\rho = 2B(D+1)$, and let $\cB_\rho$ be the $L^1$-ball of radius $\rho$ in $\bR^D$. We rewrite the problem as
\begin{align*}
N(q,2B(D+1)) = \#(\bZ^D \cap A^{-1} \cB_\rho).
\end{align*}
Now $A^{-1} \cB_\rho$ is a convex polytope spanned by the vertices $\pm \rho A^{-1} e_a$. The vertices have norm at most
\[
R = C_\epsilon B (D+1) q^{-1/2+\epsilon}.
\]
Now we are ready to apply the Davenport lemma. Let $\cP=A^{-1}\cB_\rho$. For a projection indexed by $I$, write $d = \abs{I}$. Each projection $\pi_I(\cP)$ is also a convex polytope, spanned by at most $2D$ vertices, and having circumradius at most $R$. To bound the volume, note that we can cover $\pi_I(\cP)$ with $\binom{2D}{d+1}$ simplices, defining each simplex by choosing a subset of $d+1$ vertices. The volume of a $d$-simplex with circumradius $R$ is maximized when the simplex is regular. The volume of a regular $d$-simplex is
\begin{align*}
R^d \frac{(d+1)^{(d+1)/2}}{d^{d/2} d!}
\ll
\frac{R^d}{(d-1)!}.
\end{align*}
Summing over all projections and all covering simplices gives
\begin{align*}
    \#(\bZ^D \cap \cP)
    &\leq
    1 + \sum_{d=1}^D
    \sum_{\substack{I \subset [D] \\ \abs{I} = d}}
    \operatorname{Vol}(\pi_I(\cP))
    \\
    &\ll
    1 + \sum_{d=1}^D
    \binom{D}{d} \binom{2D}{d+1}
    \frac{R^d}{(d-1)!}.
\end{align*}
Using \[
\binom{n}{k} \leq \left(\frac{en}{k} \right)^k,
\quad
n! \geq \left( \frac{n}{e} \right)^n
\]
we bound the log of the summand as follows.
\begin{align*}
\log \binom{D}{d} \binom{2D}{d+1}
    \frac{R^d}{(d-1)!} & \leq \log D + d \log D + d \log 2 D + d \log R - 3 d \log d + 3d
\\
&= \log D + d \log 2D^2R - 3 d \log d  + 3d.
\end{align*}
Differentiating and solving $\log(2D^2R) - 3 \log d = 0$, we see the maximum is achieved at $d^* = \min(D, (2D^2 R)^{1/3})$. If $R \geq D$, then $d^* = D$ and the log of the sum is
\begin{align*}
    &\leq 2\log D + D \log 2D^2 R - 3 D \log D + 3 D
    \\
    &\leq 2\log D + D \log (2R/D) + 3 D.
\end{align*}
Plugging in $R = C_\epsilon B D q^{-1/2+\epsilon}$ we have
\begin{align*}
N(q,2B(D+1)) &\ll  D^2 \left(C_\epsilon B q^{-1/2+\epsilon}\right)^D
\\
&\ll q^2 \left(C_\epsilon B q^{-1/2+\epsilon}\right)^{\phi(q)/2-1}.
\end{align*}
This is the expected asymptotic in the regime where the field is fixed and $B$ grows. On the other hand, unconditionally we have $d^* \leq (2D^2 R)^{1/3}$. The log of the sum is then
\begin{align*}
    &\leq 2\log D + d^* \log 2D^2R - 3 d^* \log d^*  + 3d^*
    \\
    &\leq 2\log D + 3(2D^2 R)^{1/3}.
\end{align*}
This gives
\begin{align*}
N(q,2B(D+1)) &\ll D^2 \exp \left( C_\epsilon B^{1/3} D q^{-1/6+\epsilon} \right)
\\
&\ll q^2 \exp \left( C_\epsilon B^{1/3} q^{5/6+\epsilon} \right).
\end{align*}

\end{proof}

We can also establish a lower bound for the number of cyclotomic units of bounded height by showing that the generators $\xi_a$ have height bounded by an absolute constant. Indeed,

\begin{prop}
    For sufficiently large $B$, $\log U_{\cyc}(q,B) \gg_B \log q$.
\end{prop}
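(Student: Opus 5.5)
The plan is to exhibit polynomially many distinct cyclotomic units of bounded height, built from the generators $\xi_a$. The key input is a uniform height bound: I will show that $h(\xi_a) \le C$ for an absolute constant $C$ and every $1<a<q/2$ with $(a,q)=1$. Since $\zeta_q^{(1-a)/2}$ is a root of unity, it does not change the height. The height is also invariant under Galois conjugation, and $h(xy^{-1}) \le h(x)+h(y)$. Hence
\[
h(\xi_a) \le h(1-\zeta_q^a) + h(1-\zeta_q) = 2h(1-\zeta_q).
\]
This is where the argument really lives. The element $1-\zeta_q$ is an algebraic integer, so its non-archimedean contributions vanish. Its archimedean conjugates are $1-\zeta_q^b$, and each has absolute value at most $2$. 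Therefore $h(1-\zeta_q)\le \log 2$, and so $h(\xi_a)\le 2\log 2$, uniformly in $q$ and $a$.

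Next I generate many units. For each admissible $a$, and each root of unity $\pm\zeta_q^k$, the unit $\pm\zeta_q^k\xi_a$ is a cyclotomic unit of height at most $2\log 2$. These units are pairwise distinct. The $\xi_a$ are distinct because they form part of a $\bZ$-basis of $C_q^+$ by the proposition above (Washington, Theorem 8.2), and multiplying by roots of unity keeps them distinct. There are $\phi(q)/2-1$ choices of $a$ and $2q$ choices of root of unity. So for every $B> 2\log 2$ we get
\[
U_{\cyc}(q,B)\gg q\,\phi(q)\gg q^{2-o(1)} \quad\text{(indeed } \phi(q)\ge q/2 \text{ for odd prime powers)},
\]
and hence $\log U_{\cyc}(q,B)\ge \log q$ for large $q$.

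To make the implied constant depend on $B$, as the statement permits, I will also use products. Take any $k\le \lfloor B/(2\log 2)\rfloor$ distinct indices $a_1,\dots,a_k$. The unit $\xi_{a_1}\cdots\xi_{a_k}$ has height at most $2k\log 2<B$. Because the $\xi_a$ are multiplicatively independent, distinct subsets give distinct units. This yields at least $\binom{\phi(q)/2-1}{k}\gg_B q^k$ units, so $\log U_{\cyc}(q,B)\gg_B \log q$ with a constant growing linearly in $B$.

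I expect only two points to need care. The first is that "sufficiently large $B$" means $B>2\log 2$ (or $B\ge 2k\log 2$ for the product version); this threshold must be stated. The second is that distinctness rests on the independence of the basis $\xi_a$, which is exactly the content of the cited basis proposition, so no further obstacle remains.
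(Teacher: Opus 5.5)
Your proof is correct and has the same skeleton as the paper's: a uniform bound $h(\xi_a)\le C$, followed by products of boundedly many generators, which gives $\gg_B q^{k}$ units with $k\approx B/C$. Where you differ is in how you prove the key height bound, and your route is cleaner.

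The paper writes $h(\xi_a)$ through the logarithmic embedding and applies the triangle inequality to $\abs{\log\abs{1-\zeta_q^{aj}}}+\abs{\log\abs{1-\zeta_q^{j}}}$. It then invokes convergence of Riemann sums for $\int_0^1\abs{\log(2\sin\pi x)}\,dx$. That step has to cope with the logarithmic singularities at the endpoints, coming from the very small conjugates $\abs{1-\zeta_q^j}$ for $j$ near $0$ or $q$, and the paper only asserts that it is easy.

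You instead use $h(xy^{-1})\le h(x)+h(y)$, together with invariance under roots of unity and Galois conjugation, to reduce to $h(1-\zeta_q)$. There integrality removes the finite places, and only terms $\log^+\abs{1-\zeta_q^b}\le\log 2$ survive. The product formula implicit in $h(y^{-1})=h(y)$ is what lets you ignore the small conjugates entirely. This gives an explicit constant $C=2\log 2$, valid for every $q$, with no analysis. The paper's version has only the mild feature of working directly with the same logarithmic sums used in its upper bound. Your counting by subsets, rather than by signed exponent vectors with $\sum\abs{c_a}\le\lfloor B/C\rfloor$, gives the same order of magnitude.

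Two small remarks.
\begin{itemize}
\item For odd $q$, every $\abs{1-\zeta_q^b}=2\abs{\sin(\pi b/q)}$ is strictly less than $2$. Hence $h(1-\zeta_q)<\log 2$, and the boundary case $2k\log 2=B$ still gives $h<B$.
\item The literal statement already follows from the $q$ roots of unity $\zeta_q^k$, which are cyclotomic units of height $0$. So your first count with $\pm\zeta_q^k\xi_a$ is unnecessary. The substantive content is that the exponent grows linearly in $B$, and your product construction does capture that.
\end{itemize}
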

\begin{proof}
    Recall that the height of $\xi_a$ is given by
\[
h(\xi_a) = \frac{1}{2(D+1)} \sum_{(j,q)=1} \abs{\log \abs{\frac{1-\zeta_q^{aj}}{1-\zeta_q^j}}}.
\]
By the triangle inequality,
\begin{align*}
h(\xi_a) &\leq \frac{1}{\phi(q)} \sum_{(j,q)=1} \left( \abs{\log \abs{1-\zeta_q^{aj}}} + \abs{\log \abs{1-\zeta_q^j}} \right) \\
&= \frac{2}{\phi(q)} \sum_{(j,q)=1} \abs{\log \abs{1-\zeta_q^j}} \\
&\ll \frac{1}{q-1}
\sum_{j=1}^{q-1} \abs{\log \abs{1-\zeta_q^j}}.
\end{align*}
Using the identity $\abs{1-\zeta_q^j} = 2\sin(\pi j/q)$, we see that this is a Riemann sum for the integral
\[
\int_0^1 \abs{\log(2\sin(\pi x))} \, dx,
\]
and it is easy to check that Riemann sums for this integral converge. Thus $h(\xi_a) \leq C$ for some absolute constant $C > 0$.

Now if $u = \prod_a \xi_a^{c_a}$ with $\sum_a \abs{c_a} \leq \lfloor B/C \rfloor$, we have $h(u) < B$. The number of such $c_a$ is given by the formula
\[
\sum_{k=0}^{\min(D,\lfloor B/C \rfloor)} 2^k \binom{D}{k} \binom{\lfloor B/C \rfloor}{k}.
\]
In particular, for $q$ sufficiently large in terms of $B$, taking the $k=\lfloor B/C \rfloor$ term gives $\gg_B q^{\lfloor B/C \rfloor}$. We obtain our theorem after taking logs.

\end{proof}

\subsection{Integers}
We now turn to counting integers. As with units, a polynomial lower bound is easy to see. To construct an integer of height $< B$, we may take any
\[
x = \sum_{i=0}^{D-1} c_i \zeta_q^i
\]
with $\sum_i \abs{c_i} < e^B$, then each conjugate has absolute value $< e^B$. If $n=\lceil e^B\rceil-1$, then like before there are
\[
\sum_{k=0}^{\min(D,n)} 2^k \binom{D}{k}\binom{n}{k} \gg_B q^n
\]
such choices of coefficients for $q$ sufficiently large in terms of $B$. We now prove the upper bound for integers.

\begin{proof}[Proof of Theorem \ref{thm:integers}]
Write $K=\bQ(\zeta_q)$ and $D=\varphi(q)$. Choose representatives $(\sigma_i)_{1 \leq i \leq D/2}$ for the Galois group up to conjugation, and define the real Minkowski embedding
\[
\Phi:K \longrightarrow \bR^D,
\qquad
\Phi(\alpha)=
\left(\operatorname{Re} \sigma_1(\alpha),\operatorname{Im} \sigma_1(\alpha),
\operatorname{Re} \sigma_2(\alpha),\operatorname{Im} \sigma_2(\alpha),\dots
\right).
\]
If $\alpha \in \bZ[\zeta_q]$, then only the archimedean places contribute to the Weil height, hence
\[
h(\alpha)
=
\frac{1}{D}\sum_{\tau:K \hookrightarrow \bC} \log^+\abs{\tau(\alpha)}
=
\frac{1}{D}
\sum_{i=1}^{D/2}
\log^+\left((\operatorname{Re}\sigma_i(\alpha))^2+(\operatorname{Im}\sigma_i(\alpha))^2\right).
\]
Define
\[
R_1=
\left\{
x \in \bR^D :
\sum_{i=1}^{D/2}
\log^+\left(x_{2i-1}^2+x_{2i}^2\right)
\leq DB
\right\},
\]
Then
\[
I(q,B)=\#(\Phi(\bZ[\zeta_q]) \cap R_1).
\]
Now enlarge $R_1$ by replacing the disk at each complex place by a square:
\[
R_2=
\left\{
x \in \bR^D :
\sum_{i=1}^{D/2}
\log^+\max\bigl(\abs{x_{2i-1}}^2,\abs{x_{2i}}^2\bigr)
\leq DB
\right\}.
\]
This clearly contains $R_1$, so
\[
I(q,B)\leq \#(\Phi(\bZ[\zeta_q]) \cap R_2).
\]
Set
\[
L=\left\lceil \frac{DB}{\log 2} \right\rceil + \frac{D}{2},
\]
and
\[
\mathcal K=
\left\{
k=(k_i)_{1 \leq i \leq D/2} \in \bZ_{\geq 0}^{D/2} :
\sum_{i=1}^{D/2} k_i=L
\right\}.
\]
For each $k \in \mathcal K$, we define a box
\[
Q(k)=
\left\{
x \in \bR^D :
\abs{x_{2i-1}}, \abs{x_{2i}} \leq 2^{k_i/2}
\text{ for all } 1 \leq i \leq D/2
\right\}.
\]
All these boxes have the same real volume
\[
\Vol(Q(k))=\prod_{i=1}^{D/2} \left(2^{1+k_i/2}\right)^2 = 2^{L+D}.
\]
Now let $x \in R_2$ and define
\[
\kappa_i=
\max\left(0,\left\lceil
\log_2 \max\bigl(\abs{x_{2i-1}}^2,\abs{x_{2i}}^2\bigr)
\right\rceil\right).
\]
Then $\sum_i \kappa_i \leq L$. If we choose $k \in \mathcal K$ with $k_i \geq \kappa_i$ for all $i$, then $x \in Q(k)$. Thus the family of boxes $Q(k)$ covers $R_2$.
Taking a maximum over all boxes we have
\[
I(q,B)
\ll
\abs{\mathcal K}
\max_k \#(\Phi(\bZ[\zeta_q]) \cap Q(k)).
\]
By stars and bars, the number of such $k$ is
\[
\abs{\mathcal K}=\binom{L+D/2-1}{D/2-1} \leq \exp(C_1 D(B+1)).
\]
Write
\[
Q(k)=\prod_{t=1}^D [-\ell_t/2,\ell_t/2] \subset \bR^D.
\]
For the two real coordinates attached to a fixed $\sigma_i$, the corresponding side lengths are both equal to $2^{1+k_i/2}$.

Now let $M$ be the $D \times D$ matrix defined by
\begin{align*}
M_{2j-1,i} &= \operatorname{Re}(\sigma_j \zeta_q^{i-1})
\\
M_{2j,i} &= \operatorname{Im}(\sigma_j \zeta_q^{i-1})
\end{align*}
for $1 \leq i \leq D$ and $1 \leq j \leq D/2$. Then $\Phi(\bZ[\zeta_q])=M\bZ^D$, so
\[
\#(\Phi(\bZ[\zeta_q]) \cap Q)
=
\#(\bZ^D \cap M^{-1}Q).
\]
Applying Davenport's lemma gives
\[
\#(\bZ^D \cap M^{-1}Q)
\leq
1+\sum_{\emptyset \neq I \subset [D]} h^{D-\abs{I}} \Vol(\pi_I(M^{-1}Q)),
\]
and since $M^{-1}Q$ is convex, every line meets $\pi_I(M^{-1} Q)$ in at most one interval, so we may set $h=1$.Fix $j$ and sum over all $I \subset [D]$ with $\abs{I}=j$. Since $Q$ is a box, each $\pi_I(M^{-1}Q)$ is a zonotope, and the zonotope volume formula gives
\[
\Vol(\pi_I(M^{-1}Q))
\leq
\sum_{\substack{J \subset [D] \\ \abs{J}=j}}
\abs{[M^{-1}]_{I,J}}
\prod_{t \in J} \ell_t.
\]
Hence by Cauchy-Schwarz,
\begin{align*}
\sum_{\substack{I \subset [D] \\ \abs{I}=j}} \Vol(\pi_I(M^{-1}Q))
&\leq
\sum_{\substack{I,J \subset [D] \\ \abs{I}=\abs{J}=j}}
\abs{[M^{-1}]_{I,J}}
\prod_{t \in J} \ell_t \\
&\leq
\left(
\sum_{\substack{I,J \subset [D] \\ \abs{I}=\abs{J}=j}}
\abs{[M^{-1}]_{I,J}}^2
\right)^{1/2}
\left(
\sum_{\substack{I,J \subset [D] \\ \abs{I}=\abs{J}=j}}
\prod_{t \in J} \ell_t^2
\right)^{1/2}.
\end{align*}
For $0 \leq j \leq D$, define
\[
E_j(Q)=\sum_{\substack{J \subset [D] \\ \abs{J}=j}} \prod_{t \in J} \ell_t.
\]
The second factor is
\[
\binom{D}{j}^{1/2} \left( \sum_{\substack{J \subset [D] \\ \abs{J}=j}} \prod_{t \in J} \ell_t^2 \right)^{1/2}
\leq
\binom{D}{j}^{1/2} E_j(Q),
\]
since $\sqrt{\sum x_i^2} \leq \sum x_i$.
Also,
\[
\sum_{j=0}^D E_j(Q)=\prod_{t=1}^D (1+\ell_t)
\ll
\prod_{i=1}^{D/2} (1+C 2^{k_i/2})^2
\ll
C_2^D 2^{\sum_i k_i}
\leq
\exp(C_3 D(B+1)).
\]

For the first factor, Jacobi's complementary minor identity gives
\[
[M^{-1}]_{I,J}
=
\pm \frac{[M]_{J^c,I^c}}{\det(M)}.
\]
Therefore
\[
\sum_{\substack{I,J \subset [D] \\ \abs{I}=\abs{J}=j}}
\abs{[M^{-1}]_{I,J}}^2
=
\frac{1}{\abs{\det(M)}^2}
\sum_{\substack{I',J' \subset [D] \\ \abs{I'}=\abs{J'}=D-j}}
\abs{[M]_{I',J'}}^2.
\]
By Cauchy-Binet, for each fixed $I' \subset [D]$ with $\abs{I'}=D-j$ we have
\[
\sum_{\substack{J' \subset [D] \\ \abs{J'}=D-j}}
\abs{[M]_{I',J'}}^2
=
[MM^T]_{I',I'}.
\]
Therefore
\[
\sum_{\substack{I',J' \subset [D] \\ \abs{I'}=\abs{J'}=D-j}}
\abs{[M]_{I',J'}}^2
=
\sum_{\substack{I' \subset [D] \\ \abs{I'}=D-j}}
[MM^T]_{I',I'}.
\]
Each row of $M$ has Euclidean norm at most $\sqrt{D}$, so every diagonal entry of $MM^T$ is at most $D$. Hence Hadamard's inequality gives
\[
[MM^T]_{I',I'} \leq D^{D-j},
\]
and therefore
\[
\sum_{\substack{I',J' \subset [D] \\ \abs{I'}=\abs{J'}=D-j}}
\abs{[M]_{I',J'}}^2
\leq
\binom{D}{j} D^{D-j}.
\]
Thus
\[
\sum_{\substack{I \subset [D] \\ \abs{I}=j}} \Vol(\pi_I(M^{-1}Q))
\leq
\binom{D}{j}\frac{D^{(D-j)/2}}{\abs{\det(M)}} E_j(Q).
\]

Now use the cyclotomic discriminant formula
\[
\abs{\disc(K)}=\frac{q^D}{p^{D/(p-1)}},
\]
so
\[
\abs{\det(M)}=2^{-D/2}\abs{\disc(K)}^{1/2}
=
2^{-D/2} q^{D/2} p^{-D/(2(p-1))}.
\]
Since $D=q(1-1/p)$, we obtain
\[
\frac{D^{D/2}}{\abs{\det(M)}}
=
\left( \sqrt{2}\sqrt{1-\frac{1}{p}}\, p^{1/(2(p-1))} \right)^D
\leq
C_4^D,
\]
and therefore
\[
\frac{D^{(D-j)/2}}{\abs{\det(M)}}
\leq
\frac{D^{D/2}}{\abs{\det(M)}}
\leq
C_4^D.
\]
It follows that
\[
\#(\bZ^D \cap M^{-1}Q)
\leq
1+C_4^D\sum_{j=1}^D \binom{D}{j} E_j(Q)
\leq
C_5^D \sum_{j=0}^D E_j(Q)
\leq
\exp\left(C_6 D(B+1)\right).
\]

Finally, summing over all dyadic boxes gives
\[
\log I(q,B) \ll D(B+1).
\]
\end{proof}

\bibliographystyle{abbrv}
\bibliography{references}

\end{document}

%% file: symbols.tex
\newtheorem{prop}{Proposition}

\newtheorem{thm}{Theorem}

\newcommand{\bC}{\mathbb{C}}

\newcommand{\bF}{\mathbb{F}}

\newcommand{\bQ}{\mathbb{Q}}
\newcommand{\bR}{\mathbb{R}}

\newcommand{\bZ}{\mathbb{Z}}

\DeclareMathOperator{\Vol}{Vol}
\DeclareMathOperator{\disc}{disc}

\DeclareMathOperator{\cyc}{cyc}

\def\bbF{ {\mathbb F}}
\def\bbQ{ {\mathbb Q}}
\def\bbZ{ {\mathbb Z}}
\def\bbP{ {\mathbb P}}

\def\bb1{ {\mathbb 1}}

\def\cB{ {\mathcal B} }

\def\cP{{\mathcal P}}

